\newtheorem{theorem}{Theorem}[section]
\newtheorem{proposition}[theorem]{Proposition}
\theoremstyle{definition}
\newtheorem{definition}[theorem]{Definition}
\theoremstyle{remark}
\numberwithin{equation}{section}
\begin{document}

\title[Limit and extended limit sets of matrices]{Limit and extended limit sets of matrices in Jordan normal form}

\author[George Costakis]{George Costakis}
\address{Department of Mathematics, University of Crete, Knossos Avenue, GR-714 09 Heraklion, Crete, Greece}
\email{costakis@math.uoc.gr}
\thanks{}

\author[Antonios Manoussos]{Antonios Manoussos}
\address{Fakult\"{a}t f\"{u}r Mathematik, SFB 701, Universit\"{a}t Bielefeld, Postfach 100131, D-33501 Bielefeld, Germany}
\email{amanouss@math.uni-bielefeld.de}
\thanks{During this research the second author was fully supported by SFB 701 ``Spektrale Strukturen und
Topologische Methoden in der Mathematik" at the University of Bielefeld, Germany. He would also like to express
his gratitude to Professor H. Abels for his support.}

\subjclass[2010]{47A16}

\date{}

\keywords{Limit set, extended limit set}

\begin{abstract}
In this note we describe the limit and the extended limit sets of every vector for a single matrix in Jordan normal form.
\end{abstract}

\maketitle

\section{Preliminaries and basic notions}
Limit and extended limit sets are in the center of interest in the study of dynamics of linear operators. To find them, even in relatively easy cases of operators, it
is a difficult task. In this note we describe the limit and the extended limit sets for the simplest case which is the case of a
single matrix in Jordan normal form. We use a method similar  to the one used by N. H. Kuiper and J. W. Robbin in \cite{KuRo}. In this work Kuiper and Robbin dealt
with the problem of the topological classification of linear endomorphisms and the main tool they used was the extended mixing limit sets of the exponential of the
nilpotent part of a Jordan block. In the following we introduce the basic notions we use in the present work.
\medskip

\noindent Let $X$ be a complex Banach space and let $T:X\to X$ be a bounded linear operator.

\begin{definition}
For every $x\in X$ the sets
\[
\begin{split}
L(x)=\{&y\in X:\,\mbox{ there exists a strictly increasing sequence}\\
       &\mbox{of positive integers}\,\,\{k_{n}\}\mbox{ such that }\,T^{k_{n} }x\rightarrow y\}
\end{split}
\]
\[
\begin{split}
J(x)=\{ &y\in X:\,\mbox{there exist a strictly increasing sequence of positive}\\
&\mbox{integers}\,\{k_{n}\}\,\mbox{and a sequence }\,\{x_{n}\}\subset X\,\mbox{such that}\,\, x_{n}\rightarrow x\,
\mbox{and}\\
&T^{k_{n}}x_{n}\rightarrow y\}
\end{split}
\]
and
\[
\begin{split}
J^{mix}(x)=\{ &y\in X:\,\mbox{ there exists a sequence}\,\{x_{n}\}\subset X\,\mbox{such that}\\
&x_{n}\rightarrow x\,\,\mbox{and}\,\,T^{n}x_{n}\rightarrow y\}
\end{split}
\]
denote the limit set, the extended (prolongational) and the extended mixing limit set of $x$ under $T$ respectively.
\end{definition}

The notions of the limit and extended limit sets are well known in the theory of topological dynamics, see
\cite{BaSz}. Roughly speaking, we can say that the limit set of a vector $x$ describes the limiting behavior of
its orbit and the corresponding extended limit set $J(x)$ describes the asymptotic behavior of all vectors
nearby $x$. Let us see how these notions are connected, at a very first, naive level. To explain, we recall the
following definition. A bounded linear operator $T:X\to X$ acting on a complex separable Banach space $X$ is
called \textit{hypercyclic} if there exists a vector $x\in X$ so that the orbit of $x$ under $T$, i.e. the set
$\{ x, Tx, T^2x, \ldots \}$, is dense in $X$. The last is equivalent with the following property: for every pair
$(U,V)$ of non-empty open subsets of $X$ there exists a positive integer $n$ such that $T^nU\cap V\neq
\emptyset$. In the language of topological dynamics an operator having the previous property is called
\textit{topologically transitive}. If in addition for every pair $(U,V)$ of non-empty open subsets of $X$ there
exists a positive integer $N$ such that $T^nU\cap V\neq \emptyset$ for every $n\geq N$, then $T$ is called
\textit{topologically mixing}. Using Proposition \ref{equivdef} it is not difficult to show that $T$ is
topologically transitive  if and only if $J(x)=X$ for every $x\in X$ and that $T$ is topologically mixing if and
only if $J^{mix}(x)=X$ for every $x\in X$ see \cite{cosma1}. For several examples of hypercyclic operators and
an in depth study of several aspects of the notion of hypercyclicity we refer to the recent books \cite{BaMa},
\cite{GEPe}. In \cite{cosma1}, we localized the notion of hypercyclicity through the use of $J$-sets. To recall
briefly, we say that a bounded linear operator $T$ acting on a Banach space (not necessarily separable) $X$ is
\textit{locally topologically transitive} or \textit{$J$-class} (\textit{locally topologically mixing}) if there
exists a non-zero vector $x\in X$ so that $J(x)=X$ ($J^{mix}(x)=X$ respectively). Among other things we showed
that there are locally hypercyclic, non-hypercyclic operators and that finite dimensional Banach spaces do not
admit locally hypercyclic operators. The next proposition, which also appears in \cite{CosMa}, gives a
description of $J$-sets through the use of open sets. To keep the paper shelf contained we present the proof.

\begin{proposition} \label{equivdef}
An equivalent definition for the sets $J(x)$, $J^{mix}(x)$ is the following.
\[
\begin{split}
J(x)=\{ &y\in X:\,\mbox{for every pair of neighborhoods}\,\, U,V\,\mbox{of}\,\, x,y\\
        &\mbox{respectively, there exists a positive integer}\,\,n,\\
        &\mbox{such that}\,\, T^{n}U\cap V\neq\emptyset \}.
\end{split}
\]
and
\[
\begin{split}
J^{mix}(x)=\{ &y\in X:\,\mbox{for every pair of neighborhoods}\,\, U,V\,\mbox{of}\,\, x,y\\
                  &\mbox{respectively, there exists a positive integer}\,\, N,\\
                  &\mbox{such that}\,\, T^{n}U\cap V\neq\emptyset\,\,\mbox{for every}\,\, n\geq N\}.
\end{split}
\]
\end{proposition}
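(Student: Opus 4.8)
The plan is to establish each of the two set equalities by double inclusion, after reducing the ``for every pair of neighbourhoods'' clauses to countable bases: since $X$ is a Banach space it is metrizable, so it suffices to quantify only over the open balls $U_{m}=\{u\in X:\|u-x\|<1/m\}$ and $V_{m}=\{v\in X:\|v-y\|<1/m\}$, $m\in\mathbb{N}$, because any neighbourhood of $x$ (resp. of $y$) contains some $U_{m}$ (resp. $V_{m}$), and $T^{n}U_{m}\cap V_{m}\neq\emptyset$ forces $T^{n}U\cap V\neq\emptyset$ whenever $U_{m}\subseteq U$, $V_{m}\subseteq V$. This is the standard passage between the sequential and the open-set descriptions of prolongational limit sets (cf. \cite{BaSz}).

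One inclusion is immediate in both cases. If $y$ lies in the sequential $J(x)$, say $y=\lim_{n}T^{k_{n}}x_{n}$ with $(k_{n})$ strictly increasing and $x_{n}\to x$, then for every pair $U\ni x$, $V\ni y$ and every sufficiently large $n$ we have simultaneously $x_{n}\in U$ and $T^{k_{n}}x_{n}\in V$, whence $T^{k_{n}}x_{n}\in T^{k_{n}}U\cap V$ and in particular this set is non-empty; this proves ``$\subseteq$'' for $J(x)$. The identical one-line argument, now using that $x_{n}\to x$ and $T^{n}x_{n}\to y$ hold for the \emph{full} sequence, shows ``$\subseteq$'' for $J^{mix}(x)$, taking $N$ so large that $x_{n}\in U$ and $T^{n}x_{n}\in V$ for all $n\geq N$.

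For the reverse inclusion in the $J^{mix}$ case, suppose $y$ satisfies the open-set condition; applying it to the pairs $(U_{m},V_{m})$ yields integers $N_{m}$, which after replacing each by a larger value we may assume satisfy $N_{1}<N_{2}<\cdots\to\infty$, such that $T^{n}U_{m}\cap V_{m}\neq\emptyset$ for all $n\geq N_{m}$. For each $n$ in the block $[N_{m},N_{m+1})$ choose $x_{n}\in U_{m}$ with $T^{n}x_{n}\in V_{m}$, and set $x_{n}=x$ for $n<N_{1}$; then $\|x_{n}-x\|<1/m$ and $\|T^{n}x_{n}-y\|<1/m$ on that block, so $x_{n}\to x$ and $T^{n}x_{n}\to y$, i.e. $y$ belongs to the sequential $J^{mix}(x)$. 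For the reverse inclusion in the $J$ case one argues similarly: applying the hypothesis to $(U_{m},V_{m})$ gives an index $n_{m}$ and a point $z_{m}\in U_{m}$ with $T^{n_{m}}z_{m}\in V_{m}$, and since $U_{m+1}\subseteq U_{m}$ and $V_{m+1}\subseteq V_{m}$ the index sets $S_{m}=\{n:T^{n}U_{m}\cap V_{m}\neq\emptyset\}$ are nested and non-empty; exploiting this nested structure one selects the $n_{m}$ (re-choosing the $z_{m}$ accordingly) so that $n_{1}<n_{2}<\cdots$, and then $z_{m}\to x$, $T^{n_{m}}z_{m}\to y$ exhibit $y$ in the sequential $J(x)$.

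The \emph{main obstacle} is precisely this last bookkeeping step for $J(x)$: a naive application of the open-set condition gives no a priori control on the size of $n_{m}$, and one must use the decreasing family $(S_{m})$ (equivalently, a diagonal argument over the balls $U_{m},V_{m}$) to force the indices to be strictly increasing while keeping the approximations $z_{m}\to x$ and $T^{n_{m}}z_{m}\to y$ intact. Once this is arranged, the remainder is routine estimation with the triangle inequality, and in fact nothing beyond the metrizability of $X$ is really used.
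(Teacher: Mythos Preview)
Your treatment of $J^{mix}(x)$ is correct and essentially identical to the paper's: both reduce to the balls $B(x,1/m)$, $B(y,1/m)$, extract a strictly increasing sequence of thresholds $N_m$ (the paper writes $k_n$), and fill in each block $[N_m,N_{m+1})$ with witnesses chosen from the $m$-th ball. The paper proves only this case in detail and declares the $J(x)$ case ``similar''.

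For $J(x)$, however, the step you single out as the ``main obstacle'' is a genuine gap, and the nested structure of the sets $S_m=\{n:T^nU_m\cap V_m\neq\emptyset\}$ does not close it. The hypothesis gives only that each $S_m$ is non-empty, not that it contains arbitrarily large integers; nothing forbids $S_m=\{1\}$ for every $m$, and then no strictly increasing selection $n_1<n_2<\cdots$ is available. Concretely, take $X=\mathbb{C}^2$ and $T(a,b)=(b,0)$, so that $T^2=0$. For $x=(0,1)$ and $y=Tx=(1,0)$ one has $y\in TU\cap V$ for every pair of neighbourhoods $U\ni x$, $V\ni y$, so $y$ lies in the open-set description (with $n=1$); yet for any strictly increasing $(k_n)$ one eventually has $k_n\geq 2$, whence $T^{k_n}x_n=0\not\to y$, and $y\notin J(x)$ in the sequential sense. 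Thus the inclusion ``$\supseteq$'' you are attempting is actually false as the proposition is literally written. The open-set characterisation of $J(x)$ becomes correct if one demands that for every pair $U,V$ and \emph{every} $N$ there exist $n\geq N$ with $T^nU\cap V\neq\emptyset$; with that amendment your diagonal selection from the nested $S_m$ goes through verbatim. The paper's own proof sidesteps the issue by treating only $J^{mix}$ explicitly.
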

\begin{proof}
We give the proof for the $J^{mix}$-sets, the proof for the $J$-sets is similar.

Let us prove that
\[
\begin{split}
J^{mix}(x) \supset \{ &y\in X:\,\mbox{for every pair of neighborhoods}\,\, U,V\,\mbox{of}\,\, x,y\\
                  &\mbox{respectively, there exists a positive integer}\,\, N,\\
                  &\mbox{such that}\,\, T^{n}U\cap V\neq\emptyset\,\,\mbox{for every}\,\, n\geq N\}.
\end{split}
\]
since the converse inclusion is obvious. Fix a vector
\[
\begin{split}
y\in \{ &y\in X:\,\mbox{for every pair of neighborhoods}\,\, U,V\,\mbox{of}\,\, x,y\\
                  &\mbox{respectively, there exists a positive integer}\,\, N,\\
                  &\mbox{such that}\,\, T^{n}U\cap V\neq\emptyset\,\,\mbox{for every}\,\, n\geq N\}.
\end{split}
\]
Consider the open balls $B(x,1/n), B(y,1/n)$ centered at $x,y\in X$ and radius $1/n$ for $n=1,2,\ldots$. Then
there exists a strictly increasing sequence $\{ k_n \}$ of positive integers such that $ T^m B(x,1/n)\bigcap
B(y,1/n)\neq\emptyset$ for every $m\geq k_n$, $n=1,2,\ldots$. Therefore there exist $$x_{k_1}, x_{k_1+1}, \ldots
,x_{k_2-1}\in B(x,1)$$ such that $ \| T^mx_m-y\| <1 $ for every $ m=k_1, k_1+1, \ldots ,k_2-1$. In a similar
fashion we may find $$x_{k_2}, x_{k_2+1}, \ldots ,x_{k_3-1}\in B(x,1/2)$$ such that $ \| T^mx_m-y\| <1/2$ for
every $m=k_2, k_2+1, \ldots ,k_3-1$. Proceeding inductively we find a sequence $\{ x_n \}$, $n\geq k_1$ such
that $x_n\to x$ and $T^nx_n\to y$. This completes the proof.
\end{proof}

Limit sets, extended limit sets and extended mixing limit sets are closed and invariant \cite{CosMa}. Next
proposition will be used later to simplify proofs.

\begin{proposition} \label{scalarA}
Let $T:X \rightarrow X$ be a bounded linear operator. Then $J_{\lambda T}(0)=J_{T}(0)$ for every $|\lambda |=1$.
\end{proposition}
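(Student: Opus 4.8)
The plan is to work with the sequential description of the $J$-set and to exploit two elementary facts: first, that $J_S(0)$ is a cone for any bounded linear operator $S$, i.e.\ closed under multiplication by arbitrary scalars; and second, that the unit circle in $\mathbb{C}$ is compact. Both observations are immediate, but combined they give the statement with no computation.

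First I would record the cone property. If $y\in J_S(0)$, choose a strictly increasing sequence $\{k_n\}$ of positive integers and vectors $x_n\to 0$ with $S^{k_n}x_n\to y$. For any scalar $c$, the vectors $cx_n$ still converge to $0$ and $S^{k_n}(cx_n)=c\,S^{k_n}x_n\to cy$, hence $cy\in J_S(0)$. In particular $J_S(0)$ is invariant under multiplication by any unimodular scalar, and this applies equally to $S=T$ and to $S=\lambda T$.

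Next, fix $|\lambda|=1$ and take $y\in J_T(0)$ with witnessing data $x_n\to 0$ and $T^{k_n}x_n\to y$. Since $(\lambda T)^{k_n}=\lambda^{k_n}T^{k_n}$ and the scalars $\lambda^{k_n}$ all lie on the compact unit circle, after passing to a subsequence (which keeps $\{k_n\}$ strictly increasing) we may assume $\lambda^{k_n}\to\mu$ for some $\mu$ with $|\mu|=1$. Then $(\lambda T)^{k_n}x_n=\lambda^{k_n}T^{k_n}x_n\to\mu y$, so $\mu y\in J_{\lambda T}(0)$, and by the cone property applied to $S=\lambda T$ with the scalar $c=\mu^{-1}$ we conclude $y=\mu^{-1}(\mu y)\in J_{\lambda T}(0)$. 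Thus $J_T(0)\subseteq J_{\lambda T}(0)$. For the reverse inclusion I would simply run the same argument with $T$ replaced by $\lambda T$ and $\lambda$ replaced by $\bar\lambda$ (again a unimodular scalar), giving $J_{\lambda T}(0)\subseteq J_{\bar\lambda\lambda T}(0)=J_T(0)$.

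I do not anticipate a genuine obstacle. The only mildly non-obvious ingredient is the passage to a convergent subsequence of $\{\lambda^{k_n}\}$ via compactness of the unit circle, followed by the use of the cone property to ``rotate back'' by $\mu^{-1}$; everything else is bookkeeping with the sequential definition. One could alternatively phrase the proof using the open-set characterization of Proposition \ref{equivdef}, but the sequential formulation keeps the argument shortest.
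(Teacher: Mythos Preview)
Your proof is correct and follows essentially the same approach as the paper: both use compactness of the unit circle to extract a convergent subsequence $\lambda^{k_n}\to\mu$, and both absorb the resulting unimodular factor by rescaling---the paper replaces $x_n$ by $x_n/\mu$ (or by $\lambda^{k_n}x_n$), while you phrase this same move abstractly as the cone property of $J_S(0)$. The only organizational difference is that the paper handles the inclusion $J_{\lambda T}(0)\subseteq J_T(0)$ directly, without compactness, via the substitution $x_n'=\lambda^{k_n}x_n$, whereas you obtain it by symmetry with $\bar\lambda$; both routes are equally short.
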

\begin{proof}
Let $y\in J_{\lambda T}(0)$. Then there exist a strictly increasing sequence of positive integers $\{ k_{n}\}$ and a sequence $\{ x_{n}\}\subset X$ such
that $x_{n}\to 0$ and  $\lambda^{k_n}T^{k_{n}}x_{n}\to y$. Since $|\lambda |=1$ then $\lambda^{k_n}x_n\to 0$ and since $T^{k_{n}}(\lambda^{k_n}x_{n})\to y$ it follows
that $y\in J_{T}(0)$. Take now a vector $y\in J_{T}(0)$. Then there exist a strictly increasing sequence of positive integers $\{ k_{n}\}$ and a sequence $\{
x_{n}\}\subset X$ such that $x_{n}\to 0$ and  $T^{k_{n}}x_{n}\to y$. Since $|\lambda |=1$, without loss of generality we may assume that
$\lambda^{k_n}\to\mu$ for some $|\mu |=1$. Hence $\lambda^{k_n}T^{k_{n}}\frac{x_{n}}{\mu}\to y$ and since $\frac{x_{n}}{\mu}\to 0$ then $y\in J_{\lambda T}(0)$.
\end{proof}

\section{Limit and extended limit sets of a matrix in Jordan normal form}
We mainly focus in the case $A$ is a $l\times l$ Jordan block over $\mathbb{C}$. This means that the main diagonal consists of $\lambda$'s, for some $\lambda \in
\mathbb{C}$, the diagonal above the main diagonal consists of $1$'s and all the other entries of the matrix are filled with zeros. We shall then describe the limit
and extended limit sets of every $x\in \mathbb{C}^l$ under $A$. The general case of a matrix in a Jordan canonical form follows easily from the latter case, since we
can ``glue" the limit and extended limit sets of separate Jordan blocks. Finally, since every complex matrix $B$ is similar to a matrix in Jordan canonical form, we
are able to determine the limit and extended limit sets of every $x\in \mathbb{C}^l$ under $B$. For the rest of this section $A$ will be a $l\times l$ Jordan block
over $\mathbb{C}$.

\begin{proposition}
\mbox{}

\begin{itemize}
\item[(i)] If $A$ is a Jordan block with an eigenvalue $|\lambda | =1$ and $x=(x_1,\ldots ,x_l)\in \mathbb{C}^l$
then $L(x)\neq\emptyset$ if and only if $x_{2}=\ldots =x_{l}=0$. In this case $L(x)=\{ Dx_{1}\}\times 0$, where
$D$ is the closure of the set $\{ \lambda^{n}\}$.

\item[(ii)] If $A$ is a Jordan block with an eigenvalue $|\lambda | >1$ then $L(x)=\emptyset$ for every non-zero vector $x\in \mathbb{C}^{l}$.

\item[(iii)] If $A$ is a Jordan block with an eigenvalue $|\lambda | <1$ then $L(x)=\{ 0\}$ for every $x\in \mathbb{C}^{l}$.
\end{itemize}
\end{proposition}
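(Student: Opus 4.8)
The plan is to reduce the whole statement to one explicit computation. Write $A=\lambda I+N$, where $N$ is the nilpotent part of the Jordan block (so $Ne_j=e_{j-1}$ for $j\ge 2$, $Ne_1=0$, and $N^l=0$). Then the binomial theorem gives
\[
A^n x=\sum_{k=0}^{l-1}\binom{n}{k}\lambda^{\,n-k}N^k x,\qquad\text{i.e.}\qquad (A^n x)_i=\sum_{k=0}^{l-i}\binom{n}{k}\lambda^{\,n-k}x_{i+k}
\]
for $1\le i\le l$ (with the convention $x_j=0$ for $j>l$). Thus each coordinate of $A^n x$ is a polynomial in $n$ times powers of $\lambda$; in particular $(A^n x)_l=\lambda^n x_l$, and if $m=\max\{j:x_j\neq 0\}$ then the coordinate of index $m-1$ (when $m\ge 2$) equals $\lambda^n x_{m-1}+n\lambda^{\,n-1}x_m$. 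Everything then follows by estimating $|\binom{n}{k}\lambda^{\,n-k}|$.

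Cases (ii) and (iii) are immediate from this. For (ii), take any $x\neq 0$, let $m=\max\{j:x_j\neq 0\}$, and note $|(A^n x)_m|=|\lambda|^n|x_m|\to\infty$ since $|\lambda|>1$; hence no subsequence of $(A^n x)_n$ converges and $L(x)=\emptyset$. For (iii), bound each summand by $\binom{n}{k}|\lambda|^{\,n-k}|x_{i+k}|\le n^k|\lambda|^{-k}|x_{i+k}|\cdot|\lambda|^n$; since $|\lambda|<1$ we have $n^k|\lambda|^n\to 0$ for each fixed $k$, and as there are only finitely many summands ($k=0,\dots,l-1$), $A^n x\to 0$ for every $x$. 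Therefore the full sequence already converges to $0$, so $L(x)=\{0\}$. (One could equally invoke that the spectral radius $|\lambda|<1$ forces $\|A^n\|\to 0$.)

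The substantive case is (i), $|\lambda|=1$. If $x_j\neq 0$ for some $j\ge 2$, put $m=\max\{j\ge 2:x_j\neq 0\}$; then the $(m-1)$-st coordinate of $A^n x$ is $\lambda^n x_{m-1}+n\lambda^{\,n-1}x_m$, of modulus at least $n|x_m|-|x_{m-1}|\to\infty$, so $(A^n x)_n$ is unbounded and $L(x)=\emptyset$. This proves the ``only if'' part. Conversely, if $x_2=\dots=x_l=0$ then $A^n x=(\lambda^n x_1,0,\dots,0)$, and $L(x)$ is precisely the set of subsequential limits of $\{\lambda^n x_1\}_{n\ge 1}$ placed in the first coordinate. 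I would finish by verifying that this set equals $Dx_1$ with $D=\overline{\{\lambda^n:n\ge 1\}}$: if $\lambda$ is a root of unity the sequence $\{\lambda^n\}$ is finite and each of its values recurs along an arithmetic progression, hence along a strictly increasing sequence; while if $\lambda$ is not a root of unity $\{\lambda^n\}$ is dense in the unit circle, so every point of $D$ (the whole circle) is the limit of $\lambda^{k_n}$ along a strictly increasing sequence. The only real care needed — and hence the main, though mild, obstacle — is exactly this last bookkeeping: confirming that every element of the closure $D$ is genuinely realized as such a limit along a \emph{strictly increasing} sequence of integers, as the definition of $L(x)$ demands. The matrix estimates themselves are entirely routine.
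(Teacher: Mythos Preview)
Your proposal is correct and follows essentially the same approach as the paper: expand $A^n$ via the binomial theorem and analyse the coordinates of $A^n x$. In fact your write-up is more complete than the paper's, which only treats $l=3$ for part (i), omits (ii) and (iii) entirely, and does not pause over the point you flag about realising every element of $Dx_1$ along a strictly increasing sequence; your device of selecting $m=\max\{j:x_j\neq 0\}$ and reading off the $(m{-}1)$-st coordinate handles general $l$ in one stroke.
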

\begin{proof}
(i) We give the proof for the case $l=3$. The general case follows easily. Let $x=(x_{1},x_{2},x_{3})$ and $y=(y_{1},y_{2},y_{3})\in L(x)$. Then there exists a
strictly increasing sequence of positive integers $\{ k_n\}$ such that $A^{k_{n}}x\rightarrow y$. Setting $y_{n}=(y_{n1},y_{n2},y_{n3})=A^{k_{n}}x$ we get
\[
\begin{array}{l}
y_{n1}=\lambda^{k_{n}}x_{1} + k_{n}\lambda^{k_{n}-1}x_{2} + \frac{k_{n}(k_{n}-1)}{2}\lambda^{k_{n}-2}x_{3}\\
y_{n2}=\lambda^{k_{n}}x_{2} + k_{n}\lambda^{k_{n}-1}x_{3}\\
y_{n3}=\lambda^{k_{n}}x_{3}.
\end{array}
\]
Using the second equation we conclude that $x_{3}=y_{3}=0$ since the sequences $\{ y_{n2}\}$ and $\{ \lambda^{k_{n}}x_{2}\}$ are bounded. So we have the following
system of linear equations.
\[
\begin{array}{l}
y_{n1}=\lambda^{k_{n}}x_{1} + k_{n}\lambda^{k_{n}-1}x_{2}\\
y_{n2}=\lambda^{k_{n}}x_{2}.
\end{array}
\]
Using the same argument for $x_{2}$ we have $x_{2}=y_{2}=0$. Hence, the only remaining equation is $y_{n1}=\lambda^{k_{n}}x_{1}$ and the proof of the proposition is
completed.

The proof of items (i) and (ii) follows easily and is omitted.
\end{proof}

In the following we describe the extended limit sets of the zero-vector.

\begin{theorem}
Let $A$ be a Jordan block with an eigenvalue $|\lambda | =1$.
\begin{itemize}
\item[(i)]   If $l=1$ then $J(0)=\{ 0\}$. If $l>1$ and $l$ is of the form $l=2r$ or $l=2r-1$ then
$J(0)=\mathbb{C}^{r}\times 0$.

\item[(ii)]  A point $y\in J(0)$ if and only if there exists a sequence $\{x_{n}\}\subset {\mathbb{C}}^l $ such that $A^{n}x_{n}\rightarrow y$, hence
$J(0)=J^{mix}(0)$.

\item[(iii)] For every linear map
$B:\mathbb{C}^{l} \rightarrow \mathbb{C}^{l}$ with eigenvalues of modulus $1$ the set $J(0)$ is a proper linear subspace of $\mathbb{C}^{l}$.
\end{itemize}
\end{theorem}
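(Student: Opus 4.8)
The plan is to conjugate $A$ to the basic nilpotent block and compute there. Write the Jordan block as $A=\lambda I+N$, where $N$ is the backward shift ($Ne_1=0$, $Ne_j=e_{j-1}$), so that $\big((I+N)^nx\big)_j=\sum_{k=0}^{l-j}\binom{n}{k}x_{j+k}$. Conjugation by $D=\mathrm{diag}(1,\lambda,\dots,\lambda^{l-1})$ turns $A$ into $\lambda(I+N)$; since $J(0)$ and $J^{mix}(0)$ are covariant under conjugation — $J_{STS^{-1}}(0)=S\,J_T(0)$, and likewise for $J^{mix}$, because $(STS^{-1})^nx_n\to y\iff T^n(S^{-1}x_n)\to S^{-1}y$ — while $D$ fixes every coordinate subspace $\mathbb{C}^k\times 0$, it is enough to treat $T=I+N$; the scalar $\lambda$ of modulus $1$ is then reinstated from $J_{\lambda T}(0)=J_T(0)$ (Proposition~\ref{scalarA}) together with its $J^{mix}$-analogue (replace $x_n$ by $\lambda^{-n}x_n$). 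So the goal becomes $J_{I+N}(0)=J^{mix}_{I+N}(0)=\mathbb{C}^r\times 0$, with $r=\lfloor l/2\rfloor$.

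For the inclusion $J_{I+N}(0)\subseteq\mathbb{C}^r\times 0$ I would argue as in the proof of the preceding Proposition on limit sets (``clearing denominators''). Let $x^{(m)}\to 0$, $n_m\nearrow\infty$ and $\big((I+N)^{n_m}x^{(m)}\big)_j\to y_j$ for all $j$; passing to a subsequence, assume every scaled quantity $\binom{n_m}{k}x^{(m)}_j$ converges in $\mathbb{C}\cup\{\infty\}$. Coordinate $l$ gives $x^{(m)}_l\to 0=y_l$; coordinate $l-1$ gives $n_mx^{(m)}_l\to y_{l-1}$, and then in coordinate $l-2$ the term $\binom{n_m}{2}x^{(m)}_l=\tfrac{n_m-1}{2}(n_mx^{(m)}_l)$ blows up unless $y_{l-1}=0$ (otherwise $x^{(m)}_{l-1}\not\to 0$). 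Proceeding downward, each further coordinate either forces a new $y_i$ to vanish or extracts a new linear relation among the (by then bounded) scaled quantities; an elementary inspection of this recursion shows $y_i=0$ is forced exactly for $i>r$, i.e.\ $y\in\mathbb{C}^r\times 0$.

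For the reverse inclusion $\mathbb{C}^r\times 0\subseteq J^{mix}_{I+N}(0)$ I would pass to the polynomial model. Identify $\mathbb{C}^l$ with the polynomials of degree $<l$ via $x\leftrightarrow f_x(t)=\sum_{j=1}^l x_j\,t^{j-1}/(j-1)!$; then $N$ becomes $d/dt$, $(e^{N})^n$ becomes the shift $f\mapsto f(\,\cdot+n)$, and $\mathbb{C}^r\times 0$ becomes $\{\deg f<r\}$. Given $y\in\mathbb{C}^r\times 0$ with $h:=f_y$ and $d:=\deg h\le r-1$, put $g_n(s)=h(s-n)\,(s/n)^{d+1}$: then $\deg g_n=2d+1\le l-1$ (since $2r\le l$), the coefficients of $g_n$ are $O(1/n)$ hence $\to 0$, and $g_n(t+n)=h(t)(1+t/n)^{d+1}\to h(t)$ coefficientwise; this is exactly $y\in J^{mix}_{e^{N}}(0)$. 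Finally $e^{N}=S(I+N)S^{-1}$ with $S^{-1}(e^{N}-I)S=N$, and since $e^{N}-I=N\,q(N)$ with $q(0)=1$ the operators $(e^{N}-I)^k$ and $N^k$ have the same kernels, so $S^{\pm1}$ preserve every $\ker N^k=\mathbb{C}^k\times 0$; transporting through $S^{-1}$ gives $\mathbb{C}^r\times 0\subseteq J^{mix}_{I+N}(0)$. Combining, $\mathbb{C}^r\times 0\subseteq J^{mix}_{I+N}(0)\subseteq J_{I+N}(0)\subseteq\mathbb{C}^r\times 0$, so all agree; reinstating $D$ and $\lambda$ proves (i) (with $l=1$ the degenerate case $J(0)=\{0\}$), and (ii) is immediate since its reformulation is the definition of $J^{mix}$, which we have shown coincides with $J$.

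For (iii), put $B$ in Jordan form $B=S\big(\bigoplus_i A_i\big)S^{-1}$ with each $A_i$ an $l_i\times l_i$ block whose eigenvalue has modulus $1$. One inclusion is clear: $J_{\oplus_i A_i}(0)\subseteq\bigoplus_i J_{A_i}(0)$, since a witnessing pair for $\oplus_iA_i$ witnesses each block coordinate. Conversely, if $y=(y_i)_i$ with each $y_i\in J_{A_i}(0)$, then by (ii) each $y_i\in J^{mix}_{A_i}(0)$, so there are $x^{(m)}_i\to 0$ with $A_i^{\,m}x^{(m)}_i\to y_i$; hence $x^{(m)}:=(x^{(m)}_i)_i\to 0$ and $(\oplus_iA_i)^m x^{(m)}\to y$, giving $y\in J_{\oplus_i A_i}(0)$. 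Thus $J_{\oplus_i A_i}(0)=\bigoplus_i J_{A_i}(0)$, and by (i) each $J_{A_i}(0)=\mathbb{C}^{r_i}\times 0$ is a proper linear subspace of the $i$-th block ($r_i<l_i$); therefore $\bigoplus_i J_{A_i}(0)$ is a proper linear subspace of $\mathbb{C}^l$, and so is its image $J_B(0)=S\big(\bigoplus_i J_{A_i}(0)\big)$. The technical heart — and the step I expect to cost the most work — is the inclusion $\subseteq$ of the second paragraph: making the downward induction rigorous, in particular the subsequence/compactification setup for the $\binom{n_m}{k}x^{(m)}_j$ and the bookkeeping of which linear combinations get annihilated at each stage; everything else is either a formal reduction or the explicit construction above.
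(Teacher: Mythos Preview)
Your proposal is correct, and the upper-bound argument (the inclusion $J(0)\subseteq\mathbb{C}^r\times 0$) is essentially the paper's: both reduce to $\lambda=1$ via Proposition~\ref{scalarA}, then exploit the explicit formula $((I+N)^nx)_j=\sum_k\binom{n}{k}x_{j+k}$ and clear denominators coordinate by coordinate. The paper only writes this out for $l=3,4$ and refers to Kuiper--Robbin for the general induction, so your sketch is at the same level of completeness on this half. Incidentally, your $r=\lfloor l/2\rfloor$ agrees with the paper's own computations (and with the later theorem on $J(x)$), even though the displayed case ``$l=2r-1$'' in the statement would literally give $r=\lceil l/2\rceil$; your reading is the intended one.

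Where you genuinely diverge is in the lower bound $\mathbb{C}^r\times 0\subseteq J^{mix}(0)$. The paper stays in coordinates and, for each small $l$, sets half of the $x_{ni}$'s and $y_{ni}$'s and solves the resulting triangular system explicitly; this is concrete but does not immediately yield a uniform formula for all $l$. Your route---passing to the polynomial model where $N=d/dt$ and $e^{nN}$ is the shift by $n$, then exhibiting $g_n(s)=h(s-n)(s/n)^{d+1}$---gives a single closed-form witness valid for every $l$, and the conjugation $e^N\sim I+N$ together with $\ker(e^N-I)^k=\ker N^k$ lets you transport the conclusion back without disturbing the flag $\mathbb{C}^k\times 0$. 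This is more conceptual and makes the role of the degree inequality $2d+1\le l-1$ (equivalently $2r\le l$) transparent, at the cost of the extra conjugation bookkeeping. For part (iii) you and the paper both glue Jordan blocks, but you make explicit the point that $J^{mix}$ (rather than $J$) is what allows the same index $n$ to serve in every block simultaneously; the paper's one-line ``since we can glue the $J(0)$-sets'' is relying on exactly this.
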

\begin{proof}
We give the proof for the cases $l=3$ and $l=4$. For the general case we may use the same technics as in \cite{KuRo}. Since, by Proposition \ref{scalarA}, $J_{\lambda
A}(0)=J_{A}(0)$ for every $\lambda$ of modulus $1$ we may assume that $\lambda=1$.

\medskip

\noindent \textit{Case} $l=3$: Let $y=((y_{1},y_{2},y_{3})\in J(0)$. Then there exist a sequence $\{x_n \}$ in
$\mathbb{C}^3$ and a strictly increasing sequence $\{ k_n\}$ of positive integers such that
$x_{n}=(x_{n1},x_{n2},x_{n3})\rightarrow (0,0,0)$ and $A^{k_{n}}x_{n}\rightarrow y$. Letting
$y_{n}=(y_{n1},y_{n2},y_{n3})=A^{k_{n}}x_{n}$, then we have
\[
\begin{array}{l}
y_{n1}=x_{n1} + k_{n}x_{n2} + \frac{k_{n}(k_{n}-1)}{2}x_{n3}\\
y_{n2}=x_{n2} + k_{n}x_{n3}\\
y_{n3}=x_{n3}.
\end{array}
\]
Since $x_{n3}\rightarrow 0$ it follows that $y_{n3}\rightarrow 0$, hence $y_{3}=0$. Using the second equation we
have $x_{n3}=\frac{y_{n2}-x_{n2}}{k_{n}}$. Therefore dividing the first equation by $k_{n}$ and substitute
$x_{n3}$ we get
\[
\frac{y_{n1}}{k_{n}}=\frac{x_{n1}}{k_{n}}+x_{n2}+\frac{k_{n}-1}{2k_{n}}(y_{n2}-x_{n2}).
\]
Since $\frac{y_{n1}}{k_{n}}$, $\frac{x_{n1}}{k_{n}}$ and $x_{n2}$ have limit $0$ and
$\frac{k_{n}-1}{2k_{n}}\rightarrow \frac{1}{2}$ then $y_{n2}-x_{n2}\rightarrow 0$. Since $x_{n2}\rightarrow 0$
it follows that $y_{n2}\rightarrow 0$, therefore $y_{2}=0$. Till now we have proved that
$J(0)\subset\mathbb{C}\times 0$. Next we show the inverse inclusion. Let $y=(y_{1},0,0)$. We put $x_{n1}=0$,
$x_{n2}=0$, $y_{n1}=y_{1}$ and then we solve the system of the linear equations. So, we have
\[
\begin{array}{lll}
x_{n1}=0, &x_{n2}=0, &x_{n3}=\frac{2y_{1}}{n(n-1)}\\
y_{n1}=y_{1}, &y_{n2}=\frac{2y_{1}}{n-1}, &y_{n3}=\frac{2y_{1}}{n(n-1)}.
\end{array}
\]
Now it is easy to check that $x_{n}\rightarrow 0$ and $y_{n}\rightarrow y$. Note that we have also proved (ii).

\medskip

\noindent \textit{Case} $l=4$: Let $y=((y_{1},y_{2},y_{3}, y_{4})\in J(0)$. Then there exist a sequence $\{x_n
\}$ in $\mathbb{C}^4$ and a strictly increasing sequence $\{ k_n\}$ of positive integers such that
$x_{n}=(x_{n1},x_{n2},x_{n3},x_{n4})\rightarrow (0,0,0)$ and $A^{k_{n}}x_{n}\rightarrow y$. Setting
$y_{n}=(y_{n1},y_{n2},y_{n3},y_{n4})=A^{k_{n}}x_{n}$ we get
\[
\begin{array}{l}
y_{n1}=x_{n1} + k_{n}x_{n2} + \frac{k_{n}(k_{n}-1)}{2}x_{n3} + \frac{k_{n}(k_{n}-1)(k_{n}-2)}{6}x_{n4}\\
y_{n2}=x_{n2} + k_{n}x_{n3} + \frac{k_{n}(k_{n}-1)}{2}x_{n4}\\
y_{n3}=x_{n3} + k_{n}x_{n4}\\
y_{n4}=x_{n4}.
\end{array}
\]
Observe that the last three equations are exactly the same as in the case where $l=3$ hence $y_{3}=y_{4}=0$.
Therefore $J(0)\subset\mathbb{C}^{2}\times 0$. Next we show the inverse inclusion. Let $y=(y_{1},y_{2},0,0)$. We
put $x_{n1}=0$, $x_{n2}=0$, $y_{n1}=y_{1}$, $y_{n2}=y_{2}$ and then we solve the system of the linear equations.
So, we have
\[
\begin{array}{lll}
x_{n1}=0, &x_{n2}=0\\
x_{n3}=\frac{2(3y_{1}-(n-2)y_{2})}{n(n+1)}, &x_{n4}=\frac{2(3(n-1)y_{2}-6y_{1})}{n(n-1)(n+1)}\\
y_{n1}=y_{1}, &y_{n2}=y_{2}\\
y_{n3}=\frac{2(3y_{1}-(n-2)y_{2})}{n(n+1)} + k_{n}\frac{2(3(n-1)y_{2}-6y_{1})}{n(n-1)(n+1)},
&y_{n4}=\frac{2(3(n-1)y_{2}-6y_{1})}{n(n-1)(n+1)}.
\end{array}
\]
Now it is easy to check that $x_{n}\rightarrow 0$ and $y_{n}\rightarrow y$. Again we have proved simultaneously
(ii). Item (iii) is obtained by (i) and (ii), since we can glue the $J(0)$-sets of the Jordan blocks of $B$.
\end{proof}

\begin{theorem} \label{jzerogreater}
Let $A$ be a Jordan block with an eigenvalue $|\lambda | >1$. Then the following hold.
\begin{itemize}
\item[(i)] $J(0)=\mathbb{C}^{l}$.

\item[(ii)] For every point $y\in\mathbb{C}^{l}$ there exists a sequence $\{ x_{n}\}\subset \mathbb{C}^{l}$ such that $A^{n}x_{n}\rightarrow y$, hence
$J(0)=J^{mix}(0)$.

\item[(iii)] For every linear map $B:\mathbb{C}^{l} \rightarrow \mathbb{C}^{l}$ with eigenvalues of modulus greater than $1$ it holds that
$J(0)=\mathbb{C}^{l}$.
\end{itemize}
\end{theorem}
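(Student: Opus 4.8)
The plan is to reduce everything to the single fact that $A$ is invertible with $A^{-n}\to 0$ in operator norm. Since $|\lambda|>1$ forces $\lambda\neq 0$, the matrix $A$ is invertible; all eigenvalues of $A^{-1}$ equal $1/\lambda$, so the spectral radius of $A^{-1}$ is $1/|\lambda|<1$, and the spectral radius formula gives $\|A^{-n}\|\to 0$. (Alternatively, writing $A=\lambda I+N$ with $N$ the nilpotent shift, one computes the entries of $A^{-n}$ explicitly: they are polynomials in $n$ times powers $\lambda^{-n}$, hence tend to $0$.) This norm estimate is the only point that really needs to be nailed down, and I would state it at the outset, either as a one-line preliminary remark or by invoking the spectral radius formula.

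Granting this, I would prove (ii) first and then read off (i). Fix $y\in\mathbb{C}^{l}$ and set $x_{n}=A^{-n}y$. Then $\|x_{n}\|\le\|A^{-n}\|\,\|y\|\to 0$, so $x_{n}\to 0$, while $A^{n}x_{n}=A^{n}A^{-n}y=y$ for every $n$, so trivially $A^{n}x_{n}\to y$. By the very definition of the extended mixing limit set this gives $y\in J^{mix}(0)$, and since $y$ was arbitrary, $J^{mix}(0)=\mathbb{C}^{l}$; this is precisely the content of (ii). For (i), observe that $J^{mix}(0)\subset J(0)$ always, because the choice $k_{n}=n$ is a legitimate strictly increasing sequence of positive integers in the definition of $J(0)$. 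Hence $\mathbb{C}^{l}=J^{mix}(0)\subset J(0)\subset\mathbb{C}^{l}$, forcing $J(0)=J^{mix}(0)=\mathbb{C}^{l}$.

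Finally, (iii) follows by exactly the same argument applied to $B$ in place of $A$: if every eigenvalue $\mu_{i}$ of $B$ has modulus $>1$, then $B$ is invertible and the spectral radius of $B^{-1}$ equals $\max_{i}1/|\mu_{i}|<1$, so $\|B^{-n}\|\to 0$; taking $x_{n}=B^{-n}y$ shows $y\in J_{B}(0)$ for every $y\in\mathbb{C}^{l}$. Equivalently, one may simply glue together the $J(0)$-sets of the individual Jordan blocks of $B$, as in the preceding theorem. I do not expect any genuine obstacle here: in contrast with the $|\lambda|=1$ case, the Jordan structure plays essentially no role once one has the estimate $\|A^{-n}\|\to 0$, and the only thing to be careful about is supplying a clean justification of that estimate.
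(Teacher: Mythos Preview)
Your argument is correct and, at its core, identical to the paper's: in both proofs one takes $x_{n}=A^{-n}y$, so that $A^{n}x_{n}=y$ exactly, and then checks $x_{n}\to 0$. The difference is only in how the convergence $x_{n}\to 0$ is justified. The paper writes out the linear system $y=A^{n}x_{n}$ explicitly for $l=3$, solves for the coordinates of $x_{n}$, and reads off that each entry is a polynomial in $n$ divided by $\lambda^{n}$; you instead invoke the spectral radius formula to get $\|A^{-n}\|\to 0$ in one stroke. Your route is cleaner and works uniformly for all $l$ (and indeed for part (iii) directly), whereas the paper's explicit computation has the minor advantage of displaying the entries of $A^{-n}y$ concretely, in keeping with the hands-on style used for the $|\lambda|=1$ case.
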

\begin{proof}
We prove the theorem for the case $l=3$. Let $y=(y_{1},y_{2},y_{3})$ and
$y_{n}=(y_{n1},y_{n2},y_{n3})=A^{n}x_{n}$. We put $y_{n1}=y_{1}$, $y_{n2}=y_{2}$, $y_{n3}=y_{3}$ and we solve
again the corresponding system of the linear equations:
\[
\begin{array}{l}
y_{n1}=\lambda^{n} x_{n1} + n\lambda^{n-1}x_{n2} + \frac{n(n-1)}{2}\lambda^{n-2}x_{n3}\\
y_{n2}=\lambda^{n} x_{n2} + n\lambda^{n-1}x_{n3}\\
y_{n3}=\lambda^{n} x_{n3}.
\end{array}
\]
Hence, we have
\[
\begin{array}{l}
x_{n1}=\frac{2\lambda^{2}y_{1}-n2\lambda y_{2} + n(n+1)y_{3}}{2\lambda^{n+2}}\\
x_{n2}=\frac{\lambda y_{2}-ny_{3}}{\lambda^{n+1}}\\
x_{n3}=\frac{y_{n3}}{\lambda^{n}}.
\end{array}
\]
Now it is trivial to check that $x_{n}\rightarrow 0$ and $y_{n}\rightarrow y$. Note that item (iii) follows by
(i) and (ii).
\end{proof}

The proof of the next proposition is easy and it is left to the reader.
\begin{proposition}
Let $A$ be a Jordan block with an eigenvalue $|\lambda | <1$. Then the following hold.
\begin{itemize}
\item[(i)] $J(0)=\{ 0\}$.

\item[(ii)] For every linear map $B:\mathbb{C}^{l} \rightarrow \mathbb{C}^{l}$ with eigenvalues of modulus less than $1$ it holds that $J(0)=\{ 0\}$.
\end{itemize}
\end{proposition}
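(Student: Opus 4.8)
The point is that an eigenvalue of modulus strictly less than $1$ forces the powers of $A$ to shrink to zero in operator norm, and no nontrivial limit can survive multiplication by a vanishing (in particular bounded) sequence; so this proposition is the easy companion of Theorems \ref{jzerogreater} and the $|\lambda|=1$ theorem, and no construction of realizing sequences is needed.

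The plan for (i) is as follows. First I would record that $\|A^{n}\|\to 0$ as $n\to\infty$. Writing $A=\lambda I+N$ with $N$ the nilpotent shift, $A^{n}=\sum_{k=0}^{l-1}\binom{n}{k}\lambda^{\,n-k}N^{k}$, so every entry of $A^{n}$ is, apart from zeros, one of the quantities $\binom{n}{k}\lambda^{\,n-k}$ with $0\le k\le l-1$; since $\binom{n}{k}|\lambda|^{\,n-k}\le n^{k}|\lambda|^{\,n-k}\to 0$ because $|\lambda|<1$, and since $A$ has only finitely many entries, $\|A^{n}\|\to 0$. (Equivalently one may quote the spectral radius formula $\lim_{n}\|A^{n}\|^{1/n}=|\lambda|<1$.) Now let $y\in J(0)$: there are $x_{n}\to 0$ and a strictly increasing sequence $\{k_{n}\}$ of positive integers with $A^{k_{n}}x_{n}\to y$. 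The sequence $\{x_{n}\}$ is bounded, say $\|x_{n}\|\le M$, and $k_{n}\to\infty$, so $\|A^{k_{n}}\|\to 0$; hence $\|y\|=\lim_{n}\|A^{k_{n}}x_{n}\|\le M\lim_{n}\|A^{k_{n}}\|=0$, i.e. $y=0$. Since $0\in J(0)$ always (take $x_{n}\equiv 0$, $k_n=n$), this proves $J(0)=\{0\}$.

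For (ii) there are two equally short routes. The direct one: if all eigenvalues of $B$ have modulus $<1$ then the spectral radius of $B$ is $<1$, hence $\|B^{n}\|\to 0$ and the argument above applies verbatim with $B$ in place of $A$. The ``gluing'' route, consistent with the rest of the section: write $B=P^{-1}CP$ with $C$ a direct sum of Jordan blocks $A_{1},\dots,A_{s}$, each having an eigenvalue of modulus $<1$; directly from the definition one has $J_{P^{-1}CP}(0)=P^{-1}J_{C}(0)$ and $J_{A_{1}\oplus\cdots\oplus A_{s}}(0)\subset J_{A_{1}}(0)\times\cdots\times J_{A_{s}}(0)$, and each factor is $\{0\}$ by part (i); together with $0\in J_{B}(0)$ this gives $J_{B}(0)=\{0\}$.

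There is essentially no obstacle here; the only point deserving a line of care is the estimate $\|A^{k_{n}}x_{n}\|\le\|A^{k_{n}}\|\,\|x_{n}\|$, which becomes decisive once one observes that $\{x_{n}\}$ is bounded and that $\|A^{k_{n}}\|\to 0$ by virtue of $\{k_{n}\}$ being strictly increasing. This stands in sharp contrast with the $|\lambda|=1$ and $|\lambda|>1$ cases, where one must actually exhibit the sequences $\{x_n\}$ realizing the prescribed limits.
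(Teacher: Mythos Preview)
Your proof is correct. In fact the paper does not give a proof of this proposition at all---it states explicitly that ``the proof of the next proposition is easy and it is left to the reader''---so your argument supplies exactly what the authors omit, and in the spirit of the surrounding results (including the gluing route for part (ii), which mirrors how items (iii) are obtained in the preceding theorems).
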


Below we treat the general case.

\begin{theorem}
Let $A$ be a Jordan block with an eigenvalue $|\lambda | =1$ and $x=(x_1,\ldots ,x_l)\in \mathbb{C}^l$.
\begin{itemize}
\item[(i)]   If $l=2r$ then $J(x)\neq\emptyset$ if and only if $x_{r+1}=\ldots =x_{l}=0$. In this case
$J(x)=\mathbb{C}^{r}\times 0$.

\item[(ii)]  If $l=2r-1$ then $J(x)\neq\emptyset$ if and only if $x_{r+1}=\ldots =x_{l}=0$. In this case
$J(x)=\mathbb{C}^{r-1}\times \{ Dx_{r}\}\times 0$, where $D$ is the closure of the set $\{ \lambda^{n}\}$. In
case where $l=1$, $J(x)=\{ Dx\}$.
\end{itemize}
\end{theorem}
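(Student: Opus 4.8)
The plan is to reduce everything, as in the earlier theorems, to the case $\lambda=1$ and then argue coordinate by coordinate. Writing $A=\lambda I+N$ with $N$ the nilpotent shift, a diagonal conjugation turns $A$ into $\lambda A_{1}$, where $A_{1}$ is the $l\times l$ Jordan block with eigenvalue $1$; combining this with Proposition~\ref{scalarA} (and, for a nonzero base point, passing to a subsequence along which $\lambda^{k_{n}}$ converges to some $\mu$ with $|\mu|=1$) reduces the problem to $A_{1}$, the surviving scalars $\mu\in D=\overline{\{\lambda^{n}\}}$ being exactly what reappears in the middle coordinate when $l$ is odd. So assume $A=A_{1}$, recall $(A^{k}v)_{i}=\sum_{j\ge i}\binom{k}{j-i}v_{j}$, and for $y\in J(x)$ fix $x_{n}\to x$ and $k_{n}\nearrow\infty$ with $y_{n}:=A^{k_{n}}x_{n}\to y$, writing $x_{ni}\to x_{i}$.

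For the necessity of $x_{r+1}=\dots=x_{l}=0$ I would run the same bottom-up elimination the paper uses for $J(0)$ in low dimension, now keeping track of the limits $x_{i}$. The bottom equation gives $x_{nl}\to x_{l}$ with no extra information; the next one, $y_{n,l-1}=x_{n,l-1}+k_{n}x_{nl}$, forces $k_{n}x_{nl}$ bounded, hence $x_{nl}=O(1/k_{n})$ and $x_{l}=0$ (when $l\ge2$). Inductively, once $x_{n,l},\dots,x_{n,l-p+1}$ are known to decay at the rates dictated by the lower equations, dividing the equation for coordinate $l-p$ by the appropriate power of $k_{n}$ and substituting the already-solved tail either forces $x_{n,l-p}$ to decay as well, giving $x_{l-p}=0$, or, once $l-p$ reaches the middle index $r$, only \emph{pins} the corresponding limit rather than killing it. The count of how many binomial terms can cancel a blow-up of order $k_{n}^{m}$ is what makes the threshold land precisely at $r=\lceil l/2\rceil$: for $l=2r$ all of $x_{r+1},\dots,x_{l}$ are eliminated, while for $l=2r-1$ one is left with a single constrained coordinate whose admissible limits, after undoing the reduction to $\lambda=1$, form the set $Dx_{r}$.

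For the reverse inclusion I would instead prescribe the target and solve the linear system for the tail. Given $x$ with $x_{r+1}=\dots=x_{l}=0$ and a target $y$ in the claimed set, set $x_{ni}:=x_{i}$ for $i\le r$, treat $x_{n,r+1},\dots,x_{n,l}$ as unknowns, and impose $y_{ni}=y_{i}$ for the $\lceil l/2\rceil-1$ or $\lceil l/2\rceil$ "head" coordinates; the relevant coefficient matrix of binomials is nonsingular for large $k_{n}$, so the tail is determined, and a direct estimate gives $x_{n,r+j}=O(k_{n}^{-j})$, whence $x_{n}\to x$. One then checks that the bottom $l-r$ output coordinates automatically tend to $0$ and that, for odd $l$, the middle output coordinate tends to the pinned value; choosing $\{k_{n}\}$ so that $\lambda^{k_{n}}$ approaches a prescribed point of $D$ realizes every point of $Dx_{r}$ in that slot. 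Since $J(x)$ is closed, the two inclusions give equality, and the case $l=1$ is immediate from the definition.

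The step I expect to be genuinely delicate is the inductive bookkeeping in the necessity direction: keeping the asymptotics of the terms $\binom{k_{n}}{j-i}x_{nj}$ straight through the elimination so that the forced cancellations kill exactly the right coordinates and the threshold falls at $\lceil l/2\rceil$, together with the correct identification of the pinned middle coordinate when $l$ is odd. This is presumably why the authors invoke the techniques of \cite{KuRo} rather than writing the general induction out in full.
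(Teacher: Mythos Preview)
Your plan is essentially the paper's own: a bottom-up elimination through the binomial system to force $x_{r+1}=\dots=x_l=0$ and pin the middle coordinate, followed by prescribing the head of $y$, fixing $x_{ni}=x_i$ for $i\le r$, and solving the triangular system for the tail to get the reverse inclusion. The paper carries this out only for $l=3$ and, unlike you, does \emph{not} reduce to $\lambda=1$ here but keeps a general unimodular $\lambda$ throughout, so that the set $D$ appears directly as the limit points of $\lambda^{k_n}$; note that Proposition~\ref{scalarA} is stated only for the base point $0$, so your conjugation-plus-subsequence reduction for nonzero $x$ would need its own short justification rather than a bare citation.
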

\begin{proof}
We only consider the case $l=3$. Let $y=(y_{1},y_{2},y_{3})\in J(x)$. Then there exist a sequence $\{ z_n\}$ in
$\mathbb{C}^3$ and a strictly increasing sequence $\{ k_n\}$ of positive integers such that
$z_{n}=(x_{n1},x_{n2},x_{n3})\rightarrow (x_{1},x_{2},x_{3})$ and $A^{k_{n}}z_{n}\rightarrow y$. Setting
$y_{n}=(y_{n1},y_{n2},y_{n3})=A^{k_{n}}z_{n}$ we get
\[
\begin{array}{l}
y_{n1}=\lambda^{k_{n}}x_{n1} + k_{n}\lambda^{k_{n}-1}x_{n2} + \frac{k_{n}(k_{n}-1)}{2}\lambda^{k_{n}-2}x_{n3}\\
y_{n2}=\lambda^{k_{n}}x_{n2} + k_{n}\lambda^{k_{n}-1}x_{n3}\\
y_{n3}=\lambda^{k_{n}}x_{n3}.
\end{array}
\]
Using the second equation we conclude that $x_{n3}\rightarrow 0$ since the sequences $\{ y_{n2}\}$ and $\{
\lambda^{k_{n}}x_{n2}\}$ are bounded. The last implies that $x_{3}=0$. Since
\[
\frac{y_{n1}}{k_{n}}=\frac{\lambda^{k_{n}}x_{n1}}{k_{n}} + \lambda^{k_{n}-1}x_{n2} +
\frac{(k_{n}-1)}{2}\lambda^{k_{n}-2}x_{n3}
\]
and $\frac{y_{n1}}{k_{n}}\rightarrow 0$, $\frac{\lambda^{k_{n}}x_{n1}}{k_{n}}\rightarrow 0$ we obtain the
following
\[
\lambda^{k_{n}-1}x_{n2} + \frac{(k_{n}-1)}{2}\lambda^{k_{n}-2}x_{n3}\rightarrow 0.
\]
Solving the second equation with respect to $x_{n3}$ and substitute above, we arrive at
\[
2\lambda^{k_{n}-1}x_{n2} + (k_{n}-1)\frac{y_{n2}-\lambda^{k_{n}-2}x_{n2}}{\lambda k_{n}}\rightarrow 0,
\]
or equivalently
\[
2\lambda^{k_{n}-1}x_{n2}+ \frac{k_{n}-1}{\lambda k_{n}}y_{n2} -
\frac{k_{n}-1}{k_{n}}\lambda^{k_{n}-1}x_{n2}\rightarrow 0.
\]
Since $\frac{k_{n}-1}{\lambda k_{n}}y_{n2}\rightarrow \frac{1}{\lambda}y_{2}$, without loss of generality we may
assume that $\lambda^{k_{n}-1}\rightarrow \mu$ for some $\mu$ of modulus $1$. Then every $y=(y_1, y_2, y_3)\in
J(x)$ satisfies the following
\[
y_{2} + \lambda\mu x_{2}=0.
\]
That is $y_{2}\in \{ Dx_{2}\}$. Hence, $J(x)\subset\mathbb{C}\times \{ Dx_{2}\}\times 0$. Next we show the
inverse inclusion. Let $y=(y_{1},y_{2},0)$ where $y_{2}\in \{ Dx_{2}\}$. Hence there exists a sequence of the
form $\{ \lambda^{k_{n}}\}$ such that $\lambda^{k_{n}-1}\rightarrow -\mu$ for some $\mu$ of modulus $1$ and
$\lambda^{k_{n}}x_{2}\rightarrow y_{2}$. Hence, $y_{2} + \lambda\mu x_{2}=0$. Setting $y_{n1}=y_{1}$,
$x_{n1}=x_{1}$, $x_{n2}=x_{2}$ we solve again the corresponding system of the linear equations. Therefore
\[
\begin{array}{l}
x_{n3}=(y_{1}-\lambda^{k_{n}}x_{1}-k_{n}\lambda^{k_{n}-1}x_{2})\frac{2}{k_{n}(k_{n}-1)\lambda^{k_{n}-2}}\rightarrow 0\\
y_{n2}=\lambda^{k_{n}}x_{2} + \frac{2\lambda}{k_{n}-1}(y_{1}-\lambda^{k_{n}}x_{1}) - \frac{2\lambda k_{n}}{k_{n}-1}\lambda^{k_{n}-1}x_{2}\rightarrow -\lambda\mu x_{2}=y_{2}\\
y_{n3}=\lambda^{k_{n}}x_{n3}\rightarrow 0
\end{array}
\]
since $x_{n3}\rightarrow x_{3}=0$ and this finishes the proof of the theorem.
\end{proof}

\begin{proposition}
Let $A$ be a Jordan block with an eigenvalue $|\lambda | >1$.
\begin{itemize}
\item[(i)] If $x\neq 0$ then $J(x)=\emptyset$.

\item[(ii)] If $x=0$ then $J(0)=\mathbb{C}^{l}$.
\end{itemize}
\end{proposition}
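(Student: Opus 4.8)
The plan is to dispose of part (ii) at once and to reduce part (i) to a one-line consequence of the invertibility of $A$. Indeed, part (ii) is nothing but Theorem \ref{jzerogreater}(i), which already computes $J(0)=\mathbb{C}^{l}$ for a Jordan block with $|\lambda|>1$ (alternatively one re-runs the explicit solution of the linear system displayed in the proof of that theorem). So the only real content is part (i): if $x\neq 0$ then $J(x)=\emptyset$.

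For part (i) I would argue by contradiction. The key observation is that, since $|\lambda|>1$, the matrix $A$ is invertible and its inverse $A^{-1}$ has spectral radius $|\lambda|^{-1}<1$; consequently $\|A^{-k}\|\to 0$ as $k\to\infty$. This can be seen either from Gelfand's spectral radius formula applied to $A^{-1}$, or concretely: writing $A=\lambda(I+\lambda^{-1}N)$ with $N$ the nilpotent shift, the entries of $A^{-k}$ are of the form $(-1)^{m}\binom{k+m-1}{m}\lambda^{-k-m}$ with $0\le m\le l-1$, each of which tends to $0$ because a fixed-degree polynomial in $k$ is killed by the factor $|\lambda|^{-k}$.

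Now suppose, towards a contradiction, that $x\neq 0$ and $y\in J(x)$ for some $y\in\mathbb{C}^{l}$. Then there are a sequence $\{x_{n}\}\subset\mathbb{C}^{l}$ with $x_{n}\to x$ and a strictly increasing sequence $\{k_{n}\}$ of positive integers with $A^{k_{n}}x_{n}\to y$; in particular $\{A^{k_{n}}x_{n}\}$ is bounded, say $\|A^{k_{n}}x_{n}\|\le M$ for all $n$. From $x_{n}=A^{-k_{n}}(A^{k_{n}}x_{n})$ and submultiplicativity of the operator norm we get $\|x_{n}\|\le\|A^{-k_{n}}\|\,M\to 0$, hence $x_{n}\to 0$, and therefore $x=0$, a contradiction. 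Thus $J(x)=\emptyset$.

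I do not expect a genuine obstacle here; the only point worth flagging is that one should resist proving (i) by tracking the individual coordinates of $A^{k_{n}}x_{n}$. If $x_{j}$ denotes the last nonzero entry of $x$, the $j$-th coordinate of $A^{k_{n}}x_{n}$ is a sum in which the leading coefficients $\binom{k_{n}}{m}\lambda^{k_{n}-m}$ blow up while the entries $x_{n,j+1},\dots,x_{n,l}$ shrink, so one cannot simply discard the higher-index terms — the passage through $\|A^{-k}\|\to 0$ is precisely what circumvents this bookkeeping.
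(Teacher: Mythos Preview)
Your argument is correct and cleaner than the paper's own. The paper proceeds exactly by the coordinate bookkeeping you warn against: for $l=3$ it writes out $y_{nj}=\sum_{i\ge j}\binom{k_n}{i-j}\lambda^{k_n-(i-j)}x_{ni}$, and starting from the bottom equation $y_{n3}=\lambda^{k_n}x_{n3}$ extracts successively stronger decay (first $x_{n3}\to 0$ and in fact $k_n(k_n-1)x_{n3}\to 0$, then $k_nx_{n2}\to 0$, then $x_{n1}\to 0$), each time feeding the previous estimate into the next equation divided by a suitable power of $k_n$. This works because the growth of $|\lambda|^{k_n}$ dominates any fixed polynomial in $k_n$, so the cross-terms you worried about are controlled once one proceeds in the right order. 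Your route via $\|A^{-k}\|\to 0$ packages this same fact (exponential beats polynomial) into a single operator-norm statement and dispatches all $l$ at once, with no induction on coordinates; the paper's approach, by contrast, is only written out for $l=3$ and would need the obvious inductive extension for general $l$. Both are valid, but yours is the more transportable argument and makes transparent that the conclusion depends only on the spectral radius of $A^{-1}$ being strictly less than $1$.
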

\begin{proof}
We give the proof for the case $l=3$. Let $y=(y_{1},y_{2},y_{3})\in J(x)$ for some $x,y\in \mathbb{C}^3$. Then
there exist a sequence $\{ z_n\}$ in $\mathbb{C}^3$ and a strictly increasing sequence $\{ k_n\}$ of positive
integers such that $z_{n}=(x_{n1},x_{n2},x_{n3})\rightarrow (x_{1},x_{2},x_{3})$ and $A^{k_{n}}z_{n}\rightarrow
y$. Set $y_{n}=(y_{n1},y_{n2},y_{n3})=A^{k_{n}}z_{n}$. Since $y_{n3}= \lambda^{k_{n}}x_{n3}\rightarrow y_{3}$
then $k_{n}(k_{n}-1)x_{n3}\rightarrow 0$. From $y_{n2}\rightarrow y_{2}$ we get
\[
\frac{y_{n2}}{k_{n}}=\frac{\lambda^{k_{n}}}{k_{n}^{2}}k_{n}x_{n2} +  \lambda^{k_{n}-1}x_{n3} \rightarrow 0.
\]
Using the fact that $\lambda^{k_n}x_{n3}\rightarrow y_{3}$ it follows that the sequence $\{
\frac{\lambda^{k_{n}}}{k_{n}^{2}}k_{n}x_{n2} \}$ converges to a finite complex number, hence
$k_{n}x_{n2}\rightarrow 0$. The last implies $x_{n2}\rightarrow 0$, therefore $x_{2}=0$. We have
\[
x_{n1}=\frac{y_{n1}}{\lambda^{k_{n}}}-\frac{1}{\lambda}k_{n}x_{n2} -\frac{1}{2}\lambda^{2}k_{n}(k_{n}-1)x_{n3}.
\]
Observing that each one term on the right hand side in the previous equality goes to $0$ ($y_{n3}\rightarrow y_{3}$) we arrive at $x_{1}=0$. Therefore $x=0$. Hence by
Theorem \ref{jzerogreater}, $J(0)=\mathbb{C}^{3}$.
\end{proof}

The proof of the next proposition is trivial and therefore is omitted.

\begin{proposition}
If $A$ is a Jordan block with an eigenvalue $|\lambda | <1$ then $J(x)=\{ 0\}$ for every $x\in \mathbb{C}^{l}$.
\end{proposition}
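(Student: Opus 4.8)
The plan is to exploit the fact that a Jordan block $A$ with $|\lambda|<1$ is a genuine contraction after enough iterations, namely that its powers tend to $0$. Concretely, the $(i,j)$ entry of $A^{n}$ equals $\binom{n}{j-i}\lambda^{n-(j-i)}$ for $j\ge i$ and $0$ otherwise, so every entry of $A^{n}$ is, up to a fixed constant, of the form $n^{s}\lambda^{n}$ with $0\le s\le l-1$; since $|\lambda|<1$, each such term tends to $0$, and therefore $\|A^{n}\|\to 0$ as $n\to\infty$ for any fixed matrix norm. (One could equally invoke the spectral radius formula $\lim_{n}\|A^{n}\|^{1/n}=r(A)=|\lambda|<1$, but the elementary estimate keeps the argument in the spirit of the preceding proofs.)

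Granting this, I would first check that $0\in J(x)$: taking the constant sequence $x_{n}=x$ gives $A^{n}x_{n}=A^{n}x\to 0$, so $0\in J^{mix}(x)\subset J(x)$. For the reverse inclusion, let $y\in J(x)$ and choose, by definition, a sequence $\{x_{n}\}\subset\mathbb{C}^{l}$ with $x_{n}\to x$ and a strictly increasing sequence $\{k_{n}\}$ of positive integers with $A^{k_{n}}x_{n}\to y$. Since $\{x_{n}\}$ converges it is bounded, say $\|x_{n}\|\le M$ for all $n$; then $\|A^{k_{n}}x_{n}\|\le\|A^{k_{n}}\|\,M\to 0$ because $k_{n}\to\infty$. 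Hence $A^{k_{n}}x_{n}\to 0$, which forces $y=0$. Combining the two inclusions yields $J(x)=\{0\}$ for every $x\in\mathbb{C}^{l}$.

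There is essentially no obstacle here; the single point deserving a word of care is the decay $\|A^{k_{n}}\|\to 0$, together with the mildly pedantic observation that one should fix the bound $\|x_{n}\|\le M$ before letting $n\to\infty$, so that the product $\|A^{k_{n}}\|\,\|x_{n}\|$ is controlled uniformly. If one wishes to avoid operator norms entirely, the same conclusion follows by writing out the $l$ coordinates of $A^{k_{n}}x_{n}$ exactly as in the previous theorems: each coordinate is a finite sum of terms $\binom{k_{n}}{s}\lambda^{k_{n}-s}x_{ni}$ with $\{x_{ni}\}$ bounded and $\binom{k_{n}}{s}\lambda^{k_{n}-s}\to 0$, hence each coordinate tends to $0$. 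Finally I would remark that the identical reasoning gives $J(x)=\{0\}$ for any linear map on $\mathbb{C}^{l}$ all of whose eigenvalues have modulus less than $1$, by gluing the contributions of its individual Jordan blocks.
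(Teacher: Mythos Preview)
Your argument is correct. The paper itself omits the proof, declaring it ``trivial,'' so there is no detailed argument to compare against; your approach---observing that $\|A^{n}\|\to 0$ because each entry $\binom{n}{s}\lambda^{n-s}$ of $A^{n}$ tends to $0$, and then using boundedness of the convergent sequence $\{x_{n}\}$---is exactly the routine verification the authors have in mind.
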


As we have already mentioned in the introduction, hypercyclicity is a phenomenon which occurs in infinite dimensions. However, as Feldman showed in \cite{Fel}, there
exist $n\times n$ commuting complex matrices $A_1, A_2, \ldots , A_{n+1}$ and vectors $x\in \mathbb{C}^n$ such that the set
$$\{ A_1^{k_1}A_2^{k_2}\ldots A_{n+1}^{k_{n+1}}x: k_1,\ldots ,k_{n+1}\in \mathbb{N} \cup \{ 0\} \}$$ is dense in $\mathbb{C}^n$. For results towards this direction see
also \cite{CoHaMa1}. Now in a similar manner as above, one can define the extended limit set of a vector $x\in \mathbb{C}^n$ associated to a fixed finite set of
(commuting) matrices. For precise definitions we refer to \cite{CoHaMa2}. A quite demanding question is the following

\medskip
\noindent\textbf{Question.} Fix $k,n\geq 2$ positive integers. Let $A_1,A_2,\ldots ,A_k$ be $n\times n$ commuting complex matrices. Describe the $J$-set
$J_{(A_1,A_2,\ldots,A_k) }(x)$ for every $x\in \mathbb{C}^n$.

\end{document}